\documentclass[12pt]{amsart}
\usepackage{amsmath, latexsym, amsfonts, amssymb, amsthm, amscd}

\DeclareMathOperator{\wlim}{w-lim}


\newcommand{\kap}{\mathrm{cap}}















    \def\beq{\begin{eqnarr}}
    \def\eeq{\end{eqnarr}}
    \def\beqq{\begin{eqnarr*}} 
    \def\eeqq{\end{eqnarr*}} 

\newtheorem {definition}{Definition}

\newtheorem {example}[definition]{Example}

\newtheorem {thm}[definition]{Theorem}

\newtheorem {corollary}[definition]{Corollary}

\newcommand{\Z}{\mathbb{Z}}

\newcommand{\cQ}{{\mathcal Q}}
\newcommand{\cP}{{\mathcal P}}

\newcommand{\N}{\mathbb{N}}

\newcommand{\R}{\mathbb{R}}

\setcounter{secnumdepth}{2}

\setlength{\oddsidemargin}{5mm}
\setlength{\evensidemargin}{5mm}
\setlength{\textwidth}{149mm}
\setlength{\headheight}{8mm}
\setlength{\headsep}{7mm}
\setlength{\topmargin}{-5mm}
\setlength{\textheight}{235mm}

\frenchspacing
\title[Random Interlacements via Kuznetsov Measures]{Random Interlacements via Kuznetsov Measures}

\author{Steffen Dereich}
\address{Steffen Dereich, Institut f\"ur Mathematische Statistik, Westf\"alische Wilhelms-Universit\"at M\"unster, Orl\'eans-Ring 10, 48149 M\"unster, Germany}
\email{}

\author{Leif D\"oring}
\address{Leif D\"oring, Departement Mathematik, ETH Z\"urich, R\"amistrasse 10, 8092 Z\"rich, Switzerland }
\email{}
\thanks{
}

     \setlength\parindent{0pt} 

\begin{document}

\maketitle

\begin{abstract}
The aim of this note is to give an alternative construction of interlacements - as introduced by Sznitman - which makes use of classical potential theory. In particular, we outline that the intensity measure of an interlacement is known   in probabilistic potential theory under the name "approximate Markov chain" or "quasi-process".
We provide a simple construction of random interlacements through (unconditioned) two-sided Brownian motions (resp.  two-sided random walks) involving Mitro's general construction of Kuznetsov measures and a Palm measures relation due to Fitzsimmons. In particular, we show that random interlacement is a Poisson cloud (`soup') of two-sided random walks (or Brownian motions) started in Lebesgue measure and restricted on being closest to the origin at time $T\in [0,1]$ - modulus time-shift.
\end{abstract}

\section{Introduction}

Since the seminal article of Sznitman \cite{S1} the subject of random interlacements has been very active.  Roughly speaking, a random interlacement is a random `soup' of trajectories indexed by the real time-axis such that a finite Poisson distributed number of trajectories can be observed when looking into an arbitrary compact set. For transient processes, such as simple random walk (SRW) on $\Z^d$, $d\geq 3$, or Brownian motion (BM) on $\R^d$, $d\geq 3$, the construction is non-trivial and done via a projective limit procedure. The original motivation to study random interlacements stems from connections to earlier studied objects such as the random walks in tori or cylinders and the Gaussian free field (see for instance \cite{W08}, \cite{S2}) and a correlation structure that allows a detailed study of percolative properties of the vacant set not covered by the trajectories (see for instance \cite{S3}, \cite{SS09}). \smallskip

The aim of the present note is to look at the model of random interlacements itself from a different angle rather than proving further properties. We explain why interlacements are very natural objects that existed in the literature long before under the name quasi-process and resulted in general excursion theory. Those were introduced by Hunt \cite{Hunt} in the probabilistic study of Martin entrance boundaries at that time under the name approximate Markov chain. Quasi-processes are variants of a Markov process with random birth and death in such a way that the occupation time measure is excessive for the Markov process. \smallskip

There is a reason why the interpretation of random interlacements as quasi-processes is interesting: Quasi-processes are linked one-to-one to so-called Kuznetsov measures and under duality assumptions a simple two-sided constructions of Kuznetsov measures exists due to Mitro \cite{Mitro}. We are not aware of such a simple construction for quasi-processes but using that quasi-process can be obtained as Palm measures of Kuznetsov measures Mitro's construction can be employed to construct quasi-processes in a direct way. We apply this idea to random interlacements and derive a representation via two-sided SRW or BM. \smallskip

\textbf{From now on all arguments are given for Brownian interlacements, all arguments work analogously for random walk interlacements.}\smallskip

We should note that all we do works also in more generality without any changes: For instance the underlying motion can be replaced by a non-simple RW or an arbitrary L\'evy process.\smallskip

After writing the article we  learnt from Jay Rosen that in the recent article \cite{R} he also used a relation between random interlacements and quasi-processes. Rosen introduced  L\'evy-driven random interlacements as a `soup' of L\'evy quasi-processes (i.e. Poisson point measure with quasi-process intensity) and used tools from probabilistic potential theory (in particular Revuz measures) for the analysis of interesection local times.  \smallskip

\textbf{Outline:} In Section 2 we recall the definition of random interlacements due to Sznitman. An overview of some results and objects from probabilistic potential theory is presented in Section 3. Finally, in Section 4 we explain how random interlacements can be constructed from unconditioned two-sided BM.

\section{Random Interlacements}\label{Sec2}
In this section we give the definition of random interlacements following \cite{S2}. As mentioned above, we stick to Brownian interlacements; the notation and results presented here are analogous in the discrete setting.\smallskip

 Denote by $W_+$ the subspace of $C(\R_+, \R^d)$, $d\in\{ 3,4,\dots\}$, of continuous $\R^d$-valued trajectories tending to infinity at infinity and by $W$ the subspace of $C(\R, \R^d)$ consisting of continuous trajectories indexed by the real-line which diverge at both ends. The canonical process will be denoted $(\omega_t)_{t\geq 0}$ (resp. $(\omega_t)_{t\in \R}$) and the shift-operators by $\sigma_t, t\geq 0$, (resp. $\sigma_t, t\in\R$).
 We work with the canonical $\sigma$-algebras $\mathcal W_+$ and $\mathcal W$ generated by the shift-operators. The first-hitting time of a closed set $F$ shall be denoted by $H_F$. The same symbol  
$H_F$ will be used for $W_+$ and $W$ without causing confusion.\smallskip
	
	Let $W^*$ denote the set $W$ modulus identification by time-change, that is $W^*=W \slash \sim$ with $\omega \sim \omega'$ if there is some $t\in \R$ such that $\omega(s)=\omega'(s+t)$ for $s\in\R$. Let $\pi$ denote the projection from $W$ to $W^*$ and $\mathcal W^*$ the induced $\sigma$-algebra on $W^*$. For any compact set $B\subset \R^d$ define $W_B$ (resp. $W^\ast_B$) the set of trajectories hitting $B$ (resp. equivalence classes with one (and then all) representatives hitting $B$).\smallskip
	
 For $y\in \R^d$ we denote by $P_y$ the law of Brownian motion started in $y$ and for a compact set $B$ not containing $y$, $P_y^B(\cdot)=P_y(\cdot\,|\, H_B=\infty)$. We extend the definition of $P_y^B$ for  $y\in \partial B$ by taking weak limits
$$
P_y^B=\wlim_{x\to y, x\in B^c} P_x^B
$$ 
provided that the limit exists. As is well known this is always the case if $B$ is a ball.
The law $P_y^B$ will be called the Brownian motion conditioned on avoiding $B$.\smallskip

For a compact set $B$ let $e_B$ denote the Brownian equilibrium measure on $B$ with capacity $\kap (B)=e_B(B)$ (see for instance \cite{MP}). There are several descriptions of $e_B$ but all we need is a relation to the harmonic measure (from infinity):
\begin{align}\label{hhh}
	\wlim_{|y|\to \infty} P_y(W_{H_B}\in dy\,|\, H_B<\infty)= \frac{e_B(dy)}{\kap (B)}
\end{align}
and the invariance property
\begin{align}\label{hh}
	P_{e_{B'}}(H_B \in dx)=e_B(dx),\quad \text{ for } B\subset B',
\end{align}
which implies $P_{e_{B'}}(H_B <\infty)=\kap (B)$.\smallskip

The crucial ingredients of random interlacements are the measures $Q_B$ on $\mathcal W$ defined as follows:
\begin{align*}
	Q_B\big((\omega_{-t})_{t\geq  0}\in A, \omega_0\in dy, (\omega_t)_{t\geq 0} \in A'\big)= P_y^B(A)\, e_B(dy)\, P_y(A'),\quad \forall A, A'\in \mathcal W_+,
\end{align*}
for a closed ball $B\subset \R^d$.
That is, from a starting point $y\in \partial B$ sampled according to $e_B$ a Brownian motion is started in positive time-direction and independently a conditioned Brownian motion is started in negative time-direction.
For a general compact set $K\subset B$ define $Q_K$ by restriction:
\begin{align}\label{res}
	Q_K:= \sigma_{H_K}\circ (1_{H_K<\infty} Q_B).
\end{align}
 By definition the $Q_K$ are concentrated on paths hitting the compact set $K$ for the first time at time $0$. One then defines the measures $Q^*_K$ on $(W_K^*, \mathcal W_K^*)$ by projecting $Q_K$  
onto $W_K^*$:
$$Q^*_K:=\pi \circ Q_K.$$
\begin{definition}\label{RI}
	A measure $\nu$ on $(W^*, \mathcal W^*)$ such that for compact sets $K\subset \R^d$ the restrictions to $(W^*_K,\mathcal W^*_K)$ are $Q^*_K$ (i.e. $1_{W^*_K} \nu = Q^*_K$) is called interlacement intensity.
\end{definition}
Brownian interlacement intensities have been constructed in \cite{S2} by an abstract projective limit procedure. The proof is similar to the original argument from \cite{S1} for random walks.
\begin{thm}[Sznitman \cite{S2}]\label{Th}
	The interlacement intensity exists and is unique.
\end{thm}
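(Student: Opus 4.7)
The plan is to realise the interlacement intensity $\nu$ as an extension along an exhausting sequence of balls $B_n := B(0,n)$, with the nested compatibility of the family $(Q^*_K)$ doing the essential work.

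First I would establish the key nested consistency: for compact $K\subset K'$ both contained in a common ball $B$, the identity
\[
\mathbf{1}_{W^*_K}\,Q^*_{K'} \;=\; Q^*_K
\]
holds. Under $Q_B$ the backward half-trajectory avoids $B\supset K$ by construction, so on the support of $Q_B$ the event that a trajectory meets $K$ is precisely $\{H_K<\infty\}$ with $H_K$ understood on the positive time axis. Since $\pi$ ignores time shifts, combining this with the definition (\ref{res}) gives
\[
\mathbf{1}_{W^*_K}\,Q^*_{K'} \;=\; \pi\circ\bigl(\mathbf{1}_{H_K<\infty}\mathbf{1}_{H_{K'}<\infty}\,Q_B\bigr) \;=\; \pi\circ(\mathbf{1}_{H_K<\infty}Q_B) \;=\; Q^*_K,
\]
where the middle equality uses $\{H_K<\infty\}\subset\{H_{K'}<\infty\}$. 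An immediate corollary is that $Q^*_K$ does not actually depend on the auxiliary ball $B\supset K$ used in (\ref{res}): any two admissible balls are jointly contained in a third ball, and the identity forces the two resulting definitions of $Q^*_K$ to agree.

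For existence, I would note that every $\omega\in W$ satisfies $\omega_0\in\R^d$ and hence $\omega\in W_{B_n}$ for all large $n$, so $W^* = \bigcup_n W^*_{B_n}$. Writing $\nu_n$ for the zero-extension of $Q^*_{B_n}$ to $(W^*,\mathcal{W}^*)$, nested consistency gives $\nu_n \le \nu_{n+1}$; set $\nu := \sup_n \nu_n$, which is a bona fide measure by monotone convergence. For any compact $K$, choosing $n$ large enough that $K\subset B_n$ and applying the identity once more yields $\mathbf{1}_{W^*_K}\nu = \mathbf{1}_{W^*_K}\nu_n = Q^*_K$, so $\nu$ is an interlacement intensity. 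The total mass of $Q_{B_n}$ equals $e_{B_n}(\partial B_n)=\kap(B_n)<\infty$, so $\nu$ is $\sigma$-finite.

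Uniqueness then follows by a standard monotone-class argument: $\mathcal{W}^*$ is generated by $\bigcup_n \mathcal{W}^*_{B_n}$, since any $A\in\mathcal{W}^*$ decomposes as $\bigcup_n (A\cap W^*_{B_n})$ with $A\cap W^*_{B_n}\in\mathcal{W}^*_{B_n}$. Two candidate intensities must both restrict to $Q^*_{B_n}$ on each $W^*_{B_n}$, so they agree on the generating class and hence, by $\sigma$-finiteness, on all of $\mathcal{W}^*$. The only delicate bookkeeping lies in the first step: verifying that the $\pi$-preimage of $W^*_K$ intersected with the support of $Q_B$ really is $\{H_K<\infty\}$, and that the shift $\sigma_{H_{K'}}$ built into (\ref{res}) is completely absorbed by $\pi$. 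Once this is laid out, everything else is routine Kolmogorov-style extension.
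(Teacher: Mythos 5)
Your argument is correct and complete, and it is essentially the classical route: establish the nested consistency $\mathbf{1}_{W^*_K}Q^*_{K'}=Q^*_K$, observe that this makes $Q^*_K$ well defined independently of the auxiliary ball, and then pass to a monotone/projective limit along the exhausting balls $B_n$. But note that the paper does not prove Theorem~\ref{Th} itself; it simply cites Sznitman~\cite{S2}, whose construction is exactly this kind of projective-limit argument. The paper's own contribution to the existence question appears later, in Theorem~\ref{t}, and takes a genuinely different route: it identifies $\nu$ with the quasi-process of Brownian motion relative to (a multiple of) Lebesgue measure, whose existence and uniqueness come from Kuznetsov's theorem together with Fitzsimmons' Palm-measure correspondence (Theorem~\ref{Tf}), and then checks that this quasi-process satisfies the defining restriction property via the hitting/capacity calculus \eqref{hhh}--\eqref{hh} and Getoor's last-exit decomposition. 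Your proof is elementary and self-contained, working directly from the structure of $Q_B$; the paper's proof imports substantial potential-theoretic machinery (Kuznetsov measures, Mitro's two-sided construction, Palm measures), but in exchange yields an explicit description of $\nu$ as an unconditioned two-sided Brownian soup and makes the time-reversal duality of Corollary~\ref{co} automatic.

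One small point worth a sentence in your write-up: under $Q_B$ the path sits on $\partial B$ at time $0$ and avoids $B$ only at strictly negative times, so when $K$ meets $\partial B$ at the sampled starting point $y$ the identification of $W_K$ (on the support of $Q_B$) with $\{H_K<\infty\}$ goes through the degenerate case $H_K=0$. The identification still holds, but the phrase ``the backward half-trajectory avoids $B$'' slightly glosses over this.
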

 We will give a more direct and perhaps easier construction via unconditioned two-sided Brownian motion.\smallskip
 
  It was shown in Remark 2.3 of \cite{S2} that $\nu$ has a remarkable time-inversion duality, i.e. the measure $\hat \nu$ obtained by reversing the time-direction is identical to $\nu$. As we shall see in Corollary \ref{co} below this is also classical in probabilistic potential theory.\smallskip
  
Finally, we should also mention the definition of the object of main interest:
 \begin{definition}
	A Poisson point process $\xi$ on $W^*$ with intensity measure $\alpha\nu$ is called Brownian random interlacement at level $\alpha>0$.
\end{definition}
In Section 4 we shall use a standard relation for Poisson point processes to give a simple construction of random interlacements in terms of conditioned two-sided Brownian motion.

\section{Elements of Probabilistic Potential Theory}
Suppose $X=(X_t)_{t\geq 0}$ is a Feller process on a locally compact separable metric  space~$E$ with transition semigroup $(P_t)_{t\geq 0}$. We will assume that $X$ is  transient meaning that there exists $f>0$ measurable with $\int_0^\infty P_t f\,dt<1$. As usual the state space $E$ is compactified by a cemetery state $\partial$.	
\smallskip
	
A central aim of probabilistic potential theory is the probabilistic analysis of excessive measures, i.e. $\sigma$-finite measures $\eta$ with $\eta P_t(dy) := \int \eta(dx) P_t(x,dy) \leq \eta(dy)$ for $t\geq 0$. 	For the study of excessive measures two versions of extended Markov processes have been introduced: {\it Kuznetsov measures} and {\it quasi-processes}. Both concepts also turned out valuable in the general theory of stochastic processes, such as in excursion theory and in the theory of reversing Markov processes.\smallskip
	
	 Let us first introduce Kuznetsov measures and consider the space $\Omega$ of RCLL (right continuous with left limits) trajectories $\omega:\R\to E\cup \{\partial\}$ that are $E$-valued on some open interval $(\alpha(\omega),\beta(\omega))$, taking the value $\partial$ outside $[\alpha(\omega),\beta(\omega))$. 
Equip $\Omega$ with the canonical $\sigma$-fields $\mathcal G_t=\sigma(\omega_s, s\leq t)$ and $\mathcal G=\sigma(\omega_t, t\in\R)$. Then a measure on $(\Omega, \mathcal G)$ is called Kuznetsov measure for $(P, \eta)$ if $\cQ$ does not charge the trivial path $\omega\equiv \partial$ and $\cQ$ has finite dimensional marginals
\begin{align*}
	&\quad \cQ\big(\alpha(w)<t_1, w_{t_1}\in d x_1, \cdots ,w_{t_n}\in dx_n, t_n<\beta(w)\big)\\
	&=\eta(dx_1)P_{t_2-t_1}(x_1,dx_2)\cdots P_{t_n-t_{n-1}}(x_{n-1},dx_n). 
\end{align*}
Under a Kuznetsov measure $\cQ$ the canonical process is also called a Markov process with random birth and death: $\alpha(\omega)=\inf\{t:\omega_t\in E\}$ is called the time of birth, $\beta(\omega)=\sup \{t:\omega_t\in E\}$ the time of death and $\zeta=\beta-\alpha$ the life-time. The life-time is well-defined and the strong Markov property holds under $\cQ$.\smallskip

\begin{thm}[Kuznetsov, \cite{Kuznetsov}]
	If $\eta$ is excessive for $(P_t)$, then there is a unique Kuznetsov measure for $(P, \eta)$.
\end{thm}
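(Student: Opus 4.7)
The plan is: existence via a Kolmogorov-type extension from a consistent family of finite-dimensional marginals, regularization onto $\Omega$ via the Feller property, and uniqueness by a monotone class argument.

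First, I would extend the displayed formula to all cylinder events in $(E \cup \{\partial\})^\R$. For $t_1 < \cdots < t_n$ and indices $k \leq l$, the cylinder $\{\omega_{t_i} = \partial$ for $i < k$ or $i > l$; $\omega_{t_i} \in A_i$ for $k \leq i \leq l\}$ corresponds, under the open-interval structure of $\{t : \omega_t \in E\}$, to the event $\{\alpha \in [t_{k-1}, t_k),\ \beta \in (t_l, t_{l+1}]\}$ (with conventions $t_0 = -\infty$, $t_{n+1} = +\infty$). Subtracting the displayed formula at consecutive resolutions, its $\cQ$-mass must be the product of a birth increment $(\eta - \eta P_{t_k - t_{k-1}})(dx_k)$, of the transition kernels $P_{t_{i+1} - t_i}(x_i, dx_{i+1})$, and of a death deficit $1 - P_{t_{l+1} - t_l}(x_l, E)$. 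Non-negativity is exactly excessivity of $\eta$ together with the sub-Markov property of $(P_t)$, and Kolmogorov consistency under dropping a time follows from Chapman--Kolmogorov $P_s P_t = P_{s+t}$ and the entrance-law identity $\eta P_s P_t = \eta P_{s+t}$.

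Since $\eta$ can be infinite, the standard Kolmogorov extension does not apply verbatim and I would work by exhaustion. Fix $E = \bigcup_n E_n$ with $\eta(E_n) < \infty$ and $s \in \R$; the displayed formula forces $\cQ(\omega_s \in E_n) = \eta(E_n) < \infty$. On this finite-mass slice Kolmogorov's theorem applies separately to the future $(\omega_t)_{t \geq s}$ (which given $\omega_s = x$ is just the Feller process started at $x$) and to the past $(\omega_t)_{t < s}$ (whose finite-dim laws, conditioned on $\{\alpha < s_0\}$ and on $\omega_s = x$, are prescribed and jointly consistent by the same Chapman--Kolmogorov calculation). Letting $n \to \infty$ and $s$ range over a countable dense subset of $\R$, and using that the prescribed marginals agree on overlaps, assembles these pieces into a single $\sigma$-finite measure $\cQ$ on cylinder sets; the ``stationary'' paths with $\alpha = -\infty$, on which $\omega_s$ has law $\eta^{\mathrm{inv}} = \lim_{t \to \infty} \eta P_t$, are picked up automatically. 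A Feller regularization along a dense countable set of times, combined with absorption at $\partial$, then produces a modification living on $\Omega$ for which $\{t : \omega_t \in E\}$ is almost surely an open interval.

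Uniqueness is a monotone class step: the displayed formula together with the forced birth and death expressions determines $\cQ$ on the $\pi$-system of cylinder sets, which generates $\mathcal G$. The main obstacle, as I see it, is the $\sigma$-finite assembly and the consistent gluing of past and future: the ingredients are standard, but because $\cQ$ has infinite total mass in general (for instance with $\eta$ the Lebesgue measure for Brownian motion in $\R^d$, as used later in Section~4) and because time is two-sided, Kolmogorov's theorem cannot be invoked off the shelf and the assembly must be done by hand.
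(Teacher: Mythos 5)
The paper does not prove this theorem itself: it simply cites Kuznetsov, remarking only that the original proof ``is based on Kolmogorov's extension theorem'' and that a simpler two-sided construction (Mitro's) exists under a duality hypothesis. Your proposal is thus a reconstruction of the route the paper \emph{attributes} to Kuznetsov, not a comparison against a proof that appears in the paper. With that understood, your sketch is sound in outline and correctly identifies the essential points: the derivation of the full cylinder masses by inclusion--exclusion, with birth increment $(\eta-\eta P_{t_k-t_{k-1}})$ and death deficit $1-P_{t_{l+1}-t_l}(x_l,E)$; the observation that non-negativity is exactly excessivity plus sub-Markovianity; Chapman--Kolmogorov consistency (including the slightly less trivial check that integrating out $t_k$ merges the two adjacent birth-increment terms into $(\eta-\eta P_{t_{k+1}-t_{k-1}})$); and the recognition that two-sided time and $\sigma$-finiteness prevent an off-the-shelf Kolmogorov application.

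The one place I would press you is the assembly step. ``Letting $n\to\infty$ and $s$ range over a countable dense set, and using that the prescribed marginals agree on overlaps'' is where essentially all of the remaining work lives, and your sketch does not say how the overlapping restrictions $\cQ\,|_{\{\omega_s\in E_n\}}$ are merged into a single measure on $\mathcal G$. The slices are not disjoint, a path can enter and leave each $E_n$ repeatedly, and paths with $\alpha=-\infty$ do not arise as a monotone limit of the finite-birth pieces in any automatic way; this requires either a projective-limit construction carried out on a ring of cylinder sets followed by a Carath\'eodory extension, or a direct decomposition of $\eta$ into a countable sum of entrance laws (as in the Fitzsimmons--Maisonneuve treatment cited in the paper). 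Likewise ``a Feller regularization \ldots produces a modification \ldots for which $\{t:\omega_t\in E\}$ is almost surely an open interval'' needs the additional observation that the finite-dimensional family forces $\partial$ to be absorbing in forward time and forces the mass $\cQ(\omega_s\in E,\,\omega_t=\partial,\,\omega_u\in E)$ to vanish for $s<t<u$, so that the alive set is indeed an interval rather than a more complicated random open set. These are fixable with standard but nontrivial arguments; the paper sidesteps all of them by invoking the reference, and later, when it needs an explicit construction, it restricts to the self-dual case where Mitro's two-sided description makes the existence transparent and the gluing issues disappear.
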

There are different proofs of the theorem, the original proof due to Kuznetsov \cite{Kuznetsov} is based on Kolmogorov's extension theorem; a simple construction under duality assumptions is recalled below.\smallskip

To get an idea of the need for finite and infinite birth times let us mention a probabilistic representation (due to Fitzsimmons, Maisonneuve \cite{FitzsimmonsMaisonneuve}) of the Riesz decomposition of an excessive measure $\eta$ into the sum $\eta=\eta_I+\eta_P$, where $\eta_I$ is an invariant measure (i.e. $\eta P_t=\eta$ for all $t\geq 0$) and $\eta_P$ is a purely excessive measure (i.e. $\eta_P P_t (f) \downarrow 0$ as $t\to\infty$ for any $f\geq 0$ with $\eta_P(f)<\infty$). The probabilistic representation of $\eta_I, \eta_P$ is obtained by separating paths of $\cQ$ according to infinite birth time and finite birth time: With
\begin{align*}
\cQ_I(\cdot)= \cQ( \cdot\,,\, \alpha=-\infty) \ \text{ and } \ \cQ_P(\cdot\,,\, -\infty <\alpha)
\end{align*}
one has for $t\in\R$
\begin{align*}
	\eta_I(\cdot)= \cQ_I(\omega_t \in \cdot\, ,\, t <\beta) \ \text{ and } \ \eta_P (\cdot)=\cQ_P(\omega_t\in \cdot\,,\, \alpha<t<\beta).
\end{align*}

We now turn to the second type of generalized Markov process that goes back to Hunt \cite{Hunt} in discrete time under the original name approximate Markov chain and has been extended by Weil \cite{Weil1} under the name quasi-process to continuous time.
Consider again the space $(\Omega, \mathcal G)$ of trajectories equipped with the canonical $\sigma$-field. 
An $\R\cup\{\infty\}$-valued random variable $S$ is called stationary time, if $S=S\circ \sigma_t+t$ and $\alpha\leq S<\beta$ on $\{S<\infty\}$. Further, it is called intrinsic, if it is almost everywhere finite. For quasi-processes one does not work on the entire $\sigma$-field $\mathcal G$ but instead on the $\sigma$-field $\mathcal A$ of $\sigma_t$-invariant events [Note: this corresponds to taking the quotient space in Section \ref{Sec2}].
If $\eta$ is excessive for $(P_t)$, then a measure $\cP$ on $(\Omega,\mathcal A)$ is called a quasi-process for $(P,\eta)$ if
\begin{align*}
	\eta(\cdot)=\cP\left(\int_{\alpha(\omega)}^{\beta(\omega)} 1_{\cdot}(\omega_s)\,ds\right)
\end{align*}
and for any intrinsic stopping time $S$ under $\cP_{|_{S\in \R}}$ the forward in time process $(\omega_{S+t})_{t\geq 0}$ is strong Markov with transitions $(P_t)$.\smallskip

Both, Kuznetsov measure and quasi-process have advantages and disadvantages in the study of excessive measures but surprisingly both concepts are equivalent via the notion of Palm measures as noticed by Fitzsimmons:\begin{thm}[Fitzsimmons \cite{Fitzsimmons}]\label{Tf}
	If $\cQ$ is a Kuznetsov measure for $(P,\eta)$ and $S$ is a finite stationary time then the Palm measure of $\cP$ and $S$ 
	\begin{align}\label{palm}
		\cP(A):=\cQ(A, S\in [0,1]),\quad A\in \mathcal A,
	\end{align}
	is a $(P,\eta)$ quasi-process and $\cP$ is independent of the choice of $S$. Conversely, if $\cP$ is a quasi-process for $(P,\eta)$, then
	\begin{align*}
		\cQ(f):= \cP\left(\int_{-\infty}^\infty f\circ \sigma_t \,dt\right),\quad f\text{ is }\mathcal G\text{-measurable},
	\end{align*}	
	defines a Kuznetsov measure for $(P,\eta)$.
\end{thm}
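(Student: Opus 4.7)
The plan rests on one crucial observation: every Kuznetsov measure is shift-invariant, i.e.\ $\cQ \circ \sigma_t^{-1} = \cQ$, because the finite-dimensional marginals in its definition depend only on the time-increments $t_{i+1}-t_i$. Combined with the covariance $S \circ \sigma_t = S - t$ of any stationary time, this reduces all the key verifications to Fubini arguments over $\R$.

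For the forward direction, I first show that $\cP$ does not depend on the choice of $S$. Given two finite stationary times $S, S'$ and a shift-invariant event $A \in \mathcal{A}$, insert the $\cQ$-a.e.\ identity $1 = \int_\R \1_{\{S' \in [t, t+1]\}}\,dt$ into $\cQ(A \cap \{S \in [0,1]\})$, apply $\sigma_{-t}$ under the integral (using shift-invariance of $\cQ$ and of $A$, together with $S' \circ \sigma_{-t} = S' + t$), and then rerun the same trick in the opposite direction to conclude $\cQ(A \cap \{S \in [0,1]\}) = \cQ(A \cap \{S' \in [0,1]\})$. The occupation identity
\begin{align*}
\cP\Bigl(\int_{\alpha}^{\beta} \1_B(\omega_s)\,ds\Bigr) = \eta(B)
\end{align*}
follows from the same shift-and-Fubini recipe applied to the shift-invariant integrand: each slice $\cQ(\omega_s \in B,\, \alpha < s < \beta,\, S \in [0,1])$ is moved to $\cQ(\omega_0 \in B,\, \alpha < 0 < \beta,\, S \in [-s,1-s])$, and the $s$-integral collapses, leaving the Kuznetsov marginal $\eta(B)$.

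The Markov property of $\cP$ at an intrinsic stationary time $T$ is inherited from the strong Markov property of $\cQ$. For such a $T$ the evaluation $\omega \mapsto \omega_{T(\omega)+t}$ is shift-invariant, so the relevant multi-time cylinder functionals live in $\mathcal{A}$; the desired Markov identity for $(\omega_{T+t})_{t \ge 0}$ can then be tested on $\cQ(\,\cdot\, \cap \{S \in [0,1]\})$ and follows by applying the strong Markov property of $\cQ$ at $T$, after a standard localisation (e.g.\ intersecting with $\{T \in [-N,N]\}$) that reduces $T$ to a bona-fide $(\mathcal{G}_t)$-stopping time and then passing to the limit.

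For the reverse direction, set $\cQ(f) := \cP\bigl(\int_\R f \circ \sigma_t\,dt\bigr)$; this is shift-invariant and charges no trivial path by construction. To identify the Kuznetsov finite-dimensional marginals, expand
\begin{align*}
\cQ(\alpha < t_1, \omega_{t_i} \in A_i, t_n < \beta) = \cP\Bigl(\int_\R \1_{\{\alpha < t_1 + s\}} \prod_{i=1}^n \1_{\{\omega_{t_i+s} \in A_i\}} \1_{\{t_n + s < \beta\}}\,ds\Bigr),
\end{align*}
substitute $u = t_1 + s$ to obtain an integral over $u \in (\alpha, \beta - (t_n-t_1))$, and peel off the time-slices one at a time by induction on $n$, each step combining the occupation identity with the strong Markov property of $\cP$; this recovers exactly the required product $\eta(dx_1) P_{t_2-t_1}(x_1,dx_2)\cdots P_{t_n-t_{n-1}}(x_{n-1},dx_n)$ restricted to $A_1 \times \cdots \times A_n$. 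The main technical obstacle throughout is the careful handling of the strong Markov property under the $\sigma$-finite measures $\cQ$ and $\cP$, especially at intrinsic stationary times that are not genuine $(\mathcal{G}_t)$-stopping times: passing rigorously between $\mathcal{A}$-level statements about $\cP$ and strong-Markov statements about $\cQ$ via the localisation above is the technical heart of Fitzsimmons' argument in \cite{Fitzsimmons}.
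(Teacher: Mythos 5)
The paper states Theorem~\ref{Tf} as a cited result of Fitzsimmons \cite{Fitzsimmons} and gives no proof of it, so there is no in-paper argument to compare yours against; what follows is an assessment of your sketch on its own terms.

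Your sketch does capture the right mechanism: shift-invariance of the Kuznetsov measure $\cQ$ (visible from the $t_i$-increment dependence of its finite-dimensional marginals) together with the covariance $S\circ\sigma_t = S-t$ of stationary times reduces both the independence-of-$S$ claim and the occupation identity
\begin{align*}
\cP\Bigl(\int_{\alpha}^{\beta}\1_B(\omega_s)\,ds\Bigr)=\eta(B)
\end{align*}
to clean Fubini computations, and those parts are correct. The genuine gaps are in the two steps you flag as ``the technical heart'' but do not carry out. First, for the Markov property of $\cP$: intersecting with $\{T\in[-N,N]\}$ does \emph{not} make an intrinsic stationary time $T$ into a $(\mathcal G_t)$-stopping time --- stationary times (e.g.\ $\operatorname{argmin}_t|\omega_t|$) are generally non-adapted and see the whole trajectory, so this ``standard localisation'' has no content, and a real argument (via homogeneous random measures / the moderate Markov property, which is the bulk of Fitzsimmons' paper) is needed. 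Second, in the converse direction, the ``peel off the time-slices by induction, combining the occupation identity with the strong Markov property of $\cP$'' step is not justified as written: the quasi-process Markov property you want to invoke is stated at \emph{intrinsic stationary} times, whereas your integration variable $u$ is a deterministic time; applying the Markov property directly inside the occupation integral is exactly what requires proof. These two points are where the theorem's actual content lives, and your proposal leaves them open.
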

In words: Given a Kuznetsov measure and a stationary time $S$, restricting to trajectories with $S\in [0,1]$ and forgetting about the time parametrization results in a quasi-process. Conversely, given a quasi-process defined on the invariant $\sigma$-field, then one obtains the corresponding Kuznetsov measure by independently attributing to each path a reference time.\smallskip

A simple consequence is the existence and uniqueness of quasi-processes, deduced from the analogue statement for Kuznetsov measures:
\begin{corollary}
	Given an excessive measure $\eta$ for a Feller transition semigroup $(P_t)$ there is a unique quasi-process for $(P,\eta)$. 
\end{corollary}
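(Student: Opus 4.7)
The strategy is to transfer the existence and uniqueness statement from Kuznetsov measures to quasi-processes via the Palm correspondence of Theorem \ref{Tf}. For existence, I would invoke Kuznetsov's theorem to obtain the unique Kuznetsov measure $\cQ$ for $(P,\eta)$; then, having fixed any finite stationary time $S$, set $\cP(A) := \cQ(A,\, S \in [0,1])$ for $A \in \mathcal A$. The forward direction of Theorem \ref{Tf} guarantees that $\cP$ is a $(P,\eta)$ quasi-process, and that $\cP$ does not depend on the choice of $S$.

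For uniqueness, suppose $\cP'$ is any $(P,\eta)$ quasi-process. The reverse direction of Theorem \ref{Tf} produces a Kuznetsov measure
\begin{align*}
	\cQ'(f) := \cP'\left(\int_{\R} f \circ \sigma_t \, dt\right), \quad f \ \mathcal G\text{-measurable},
\end{align*}
for $(P,\eta)$, and by the uniqueness of Kuznetsov measures we conclude $\cQ' = \cQ$. It remains to verify that the Palm construction at $S$ recovers $\cP'$ from $\cQ'$: for $A \in \mathcal A$, using $\1_A \circ \sigma_t = \1_A$ and the stationarity identity $S \circ \sigma_t = S - t$, Fubini gives
\begin{align*}
	\cQ'(A,\, S \in [0,1]) &= \cP'\left(\int_{\R} \1_A \cdot \1_{[0,1]}(S - t) \, dt\right) \\
	&= \cP'\left( \1_A \int_{\R} \1_{[S-1,\, S]}(t) \, dt \right) = \cP'(A).
\end{align*}
Combining this with $\cQ' = \cQ$ yields $\cP' = \cP$, proving uniqueness.

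The principal technical point is supplying a finite stationary time $S$ under $\cQ$, since both directions of Theorem \ref{Tf} rely on such an $S$. This is standard: on the purely excessive part $\cQ_P$ (where $\alpha > -\infty$) the birth time $\alpha$ itself satisfies $\alpha \circ \sigma_t = \alpha - t$ and is $\cQ_P$-a.s.\ finite, while on the invariant part $\cQ_I$ a stationary $S$ can be produced as a weighted time-average of the trajectory against a suitably integrable test function. Once such an $S$ is in hand, the remaining argument is bookkeeping with the shift-invariance of $\cQ$ and Fubini's theorem, and no further input is needed.
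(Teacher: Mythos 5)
Your proof is correct and follows the same route the paper intends: it deduces existence and uniqueness from Kuznetsov's theorem via the Palm correspondence of Theorem \ref{Tf}. The paper states the corollary without further argument as ``a simple consequence\ldots deduced from the analogue statement for Kuznetsov measures,'' and your Fubini computation verifying that the two directions of Theorem \ref{Tf} are mutually inverse, together with the observation that a finite stationary time exists (using transience), supplies exactly the details the paper leaves implicit.
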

It depends on the problem whether it is more useful to work with the $(P,\eta)$ Kuznetsov measure or quasi-process. A general advantage of Kuznetsov measures is employed in the next section: Under a duality assumption there is always a simple construction for Kuznetsov measures as (unconditioned) two-sided process, whereas there is no simple construction for quasi-processes.

\section{Random Interlacements through Mitro's Construction}

A particularly simple construction of Kuznetsov measures for Markov processes in duality with respect to an invariant measure $\eta$ is due to to Mitro. Suppose $X$ is a Feller process on $E$ with transition semigroup $(P_t)$ and $X$ is in duality to a second Feller process $\hat X$ with respect to an invariant measure $\eta$, that is, for all $h,g$ bounded and measurable,
\begin{align*}
	E^\eta[h(X_t)g(X_0)]=\hat E^\eta\big[h(\hat X_0)g(\hat X_t)\big],\quad \forall t\geq 0,
\end{align*}
or, equivalently, in the more usual notation $\big\langle P_th,g\rangle_\eta=\big\langle h,\hat P_tg\rangle_\eta$. Hunt's
 achievement was to relate duality and time-reversal in the setting of quasi-processes:
\begin{thm}[Hunt \cite{Hunt}]\label{Hunt} 
	If $\cP$ is the quasi-process for $(P,\eta)$, then the measure $\hat \cP$ obtained under time-reversal and switching to the RCLL version 
 is the quasi-process for $(\hat P, \eta)$.
\end{thm}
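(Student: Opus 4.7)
The plan is to lift the claim via Fitzsimmons' Palm correspondence (Theorem~\ref{Tf}) to the level of Kuznetsov measures, where time-reversal reduces to an essentially algebraic manipulation of finite-dimensional marginals. Let $\cQ$ be the Kuznetsov measure for $(P,\eta)$, so that $\cP(A)=\cQ(A,\,S\in[0,1])$ for any finite stationary time~$S$. Let $r\colon\Omega\to\Omega$ denote the operation of time-reversal followed by passage to the RCLL version, and set $\tilde\cQ:=r_*\cQ$; by definition $\hat\cP$ is the restriction of $r_*\cP$ to the invariant $\sigma$-field $\mathcal A$. The argument then proceeds in two stages: first show that $\tilde\cQ$ is the Kuznetsov measure for $(\hat P,\eta)$, then identify its Palm measure with $\hat\cP$. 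Combined with Theorem~\ref{Tf} this yields the claim.

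For the first stage I would compute the finite-dimensional marginals of $\tilde\cQ$ at $t_1<\cdots<t_n$, which by definition of $r_*$ coincide with the marginals of $\cQ$ at $-t_n<\cdots<-t_1$ evaluated in reverse order at $(x_n,\ldots,x_1)$. Expanding these via the Kuznetsov formula yields an expression of the form
\[
\eta(dx_n)\,P_{t_n-t_{n-1}}(x_n,dx_{n-1})\,P_{t_{n-1}-t_{n-2}}(x_{n-1},dx_{n-2})\cdots P_{t_2-t_1}(x_2,dx_1).
\]
Duality with respect to $\eta$ is, in measure form, the detailed-balance identity $\eta(dx)\,P_t(x,dy)=\eta(dy)\,\hat P_t(y,dx)$, which is just a rewriting of $\langle P_t h,g\rangle_\eta=\langle h,\hat P_t g\rangle_\eta$. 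Applying it $n-1$ times to transport the $\eta$-mass from $x_n$ down to $x_1$ while swapping each forward kernel for a backward one reshapes the marginal into the Kuznetsov marginal for $(\hat P,\eta)$. Because $r$ sends $(\alpha,\beta)$ to $(-\beta,-\alpha)$, the birth/death restrictions carry over correctly, and uniqueness of Kuznetsov measures (Kuznetsov's theorem) forces $\tilde\cQ=\hat\cQ$.

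For the second stage, the intertwining $r\circ\sigma_t=\sigma_{-t}\circ r$ implies that $\tilde S:=-S\circ r^{-1}$ is a finite stationary time for $\tilde\cQ$. By the $S$-independence clause of Theorem~\ref{Tf}, replacing $S$ by the stationary time $S+1$ gives the alternative representation $\cP(A)=\cQ(A,\,S\in[-1,0])$, and hence for $A\in\mathcal A$
\[
\hat\cP(A)=\cP(r^{-1}A)=\cQ(r^{-1}A,\,S\in[-1,0])=\tilde\cQ(A,\,\tilde S\in[0,1]).
\]
By Theorem~\ref{Tf} applied to $\hat\cQ=\tilde\cQ$, the rightmost expression is the $(\hat P,\eta)$ quasi-process. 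The main technical obstacle I anticipate is bookkeeping rather than conceptual: the naive reversal of an RCLL path is only left-continuous with right limits, so one has to verify that its right-continuous modification is jointly measurable and differs from the naive reversal only at the countable set of jump times, which is guaranteed by the stochastic continuity of the Feller semigroup $(P_t)$.
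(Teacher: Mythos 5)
Your proposal is correct, and it shares the paper's overall architecture: lift the claim to Kuznetsov measures, establish that time-reversal exchanges $\cQ$ and $\hat\cQ$, then transfer back to quasi-processes via Fitzsimmons' Palm correspondence. Where you diverge is at the middle step. The paper invokes Mitro's two-sided construction as a black box: since $\cQ$ is built by sampling $\eta$ at time $0$ and running $X$ forward and $\hat X$ backward, and $\hat\cQ$ is built symmetrically with the roles swapped, the two are visibly exchanged by time-reversal. You instead verify $r_*\cQ = \hat\cQ$ from scratch by computing the finite-dimensional marginals of the pushforward and folding them through the detailed-balance identity $\eta(dx)\,P_t(x,dy)=\eta(dy)\,\hat P_t(y,dx)$ $n-1$ times, then appealing to uniqueness of Kuznetsov measures. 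This is essentially a re-derivation of the ingredient that Mitro's theorem supplies, so it is more self-contained at the cost of being longer; the paper's version is tighter but only because Mitro's result has already been stated. Your treatment of the Palm step is actually more careful than the paper's: the paper simply remarks that stationary times for $\cQ$ remain stationary for $\hat\cQ$, whereas you notice the sign flip $S\mapsto\tilde S=-S\circ r^{-1}$ sends the window $[0,1]$ to $[-1,0]$ and explicitly invoke the $S$-independence clause (via $S\mapsto S+1$) to restore $[0,1]$. That detail is glossed over in the paper and is worth spelling out, as you did.

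One small remark on the edge case you did not flag: stationary times satisfy $\alpha\le S<\beta$, and under reversal this becomes $\alpha\circ r< \tilde S\le\beta\circ r$, so the endpoint convention is reflected. This is harmless here (and the paper ignores it too), but if you wanted to be fully rigorous you would note that $\cQ$ gives zero mass to $\{S=\beta\}$ for a finite stationary time, so the two conventions agree $\cQ$-a.e.
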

Time-reversal for dual processes also holds for Kuznetsov measures and is reflected in an important construction: 
\begin{thm}[Mitro \cite{Mitro}]
The Kuznetsov measure $\cQ$  for $(P,\eta)$ is the unique measure with finite dimensional marginals
	\begin{align*}
		\int_E  \hat P_x[\hat X_{s_1}\in dy_1,\cdots ,\hat X_{s_l}\in dy_l]\,\eta(dx)\, P_x[ X_{t_1}\in dx_1,\cdots  ,X_{t_k}\in dx_k]
	\end{align*}
	at the times $s_l<\cdots <s_1<0\leq t_1<\cdots t_k$.
\end{thm}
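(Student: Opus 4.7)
The plan is threefold: check consistency of the prescribed family of finite-dimensional distributions, identify them with the Kuznetsov marginals by iterated use of duality, and conclude uniqueness from a monotone class argument.

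For consistency, interpret $\hat P_x[\hat X_{s_i}\in dy_i,\ldots]$ as the law of the $\hat X$-chain read at the positive times $-s_1<\cdots<-s_l$. Integrating out any intermediate $y_j$ or $x_i$ in
$$\int_E \hat P_x[\hat X_{s_1}\in dy_1,\ldots,\hat X_{s_l}\in dy_l]\,\eta(dx)\,P_x[X_{t_1}\in dx_1,\ldots,X_{t_k}\in dx_k]$$
collapses two consecutive $\hat P$- or $P$-kernels into one via Chapman--Kolmogorov, and removing every negative-time coordinate returns the one-sided Markovian marginal $\eta(dx)P_x[\cdots]$. Since $\eta$ is $\sigma$-finite and in general not a probability, I would exhaust $E=\bigcup_n E_n$ by sets with $\eta(E_n)<\infty$, apply Kolmogorov's extension theorem on $\{\omega_{t_0}\in E_n\}$ for a fixed reference time $t_0$ to obtain a finite measure $\cQ_n$, and paste them monotonically into a $\sigma$-finite $\cQ$ on $(\Omega,\mathcal G)$ with the prescribed marginals.

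To match $\cQ$ with the Kuznetsov measure, I invoke the measure-valued reformulation of duality,
$$\eta(dx)\,\hat P_t(x,dy)=\eta(dy)\,P_t(y,dx),$$
which is just a rewriting of $\langle P_t h,g\rangle_\eta=\langle h,\hat P_t g\rangle_\eta$, and slide the $\eta$-mass outward from the integration variable $x$ to $y_l$. The first step rewrites $\eta(dx)\hat P_{-s_1}(x,dy_1)$ as $\eta(dy_1)P_{-s_1}(y_1,dx)$; then Chapman--Kolmogorov $P_{-s_1}(y_1,dx)P_{t_1}(x,dx_1)=P_{t_1-s_1}(y_1,dx_1)$ eliminates $x$ entirely. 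Applying the same flip to each pair $\eta(dy_j)\hat P_{s_j-s_{j+1}}(y_j,dy_{j+1})$ for $j=1,\ldots,l-1$ successively transfers $\eta$ onto $y_{j+1}$ and converts $\hat P$ into $P$. After $l$ steps the expression reduces to
$$\eta(dy_l)\,P_{s_{l-1}-s_l}(y_l,dy_{l-1})\cdots P_{s_1-s_2}(y_2,dy_1)\,P_{t_1-s_1}(y_1,dx_1)\prod_{i=2}^{k} P_{t_i-t_{i-1}}(x_{i-1},dx_i),$$
which is precisely the defining Kuznetsov marginal at $s_l<\cdots<s_1<0\leq t_1<\cdots<t_k$. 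Uniqueness then follows by a Dynkin $\pi$-$\lambda$ argument on each localized piece $\{\omega_{t_0}\in E_n\}$, since cylinder sets based on finitely many coordinates form a generating $\pi$-system for $\mathcal G$.

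The main obstacle I anticipate is the $\sigma$-finite extension in the construction step, because Kolmogorov's theorem is really a statement about projective families of probability measures, so exhaustion of $\eta$ is essential. Beyond that the argument is essentially mechanical; the conceptual crux is that $\eta$-invariance (not mere excessiveness) is exactly what makes the kernel-level duality identity an equality of measures, and hence what allows the symmetric two-sided construction to realise the Kuznetsov measure on the nose.
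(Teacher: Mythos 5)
The paper itself gives no proof of this theorem: it is stated with a citation to Mitro \cite{Mitro} and used as a black box, so there is nothing in the paper to compare your argument against. Evaluated on its own, your proof plan is essentially the correct one and captures Mitro's argument: flip the $\eta$-mass through the $\hat P$-kernels one step at a time using $\eta(dx)\,\hat P_t(x,dy)=\eta(dy)\,P_t(y,dx)$, absorb the integration variable $x$ by one Chapman--Kolmogorov step across time $0$, recognize the result as the defining Kuznetsov marginal at $s_l<\cdots<s_1<t_1<\cdots<t_k$, handle $\sigma$-finiteness of $\eta$ by exhaustion $E=\bigcup_n E_n$ before invoking Kolmogorov's extension theorem, and conclude uniqueness by a $\pi$--$\lambda$ argument on cylinder sets. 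That is the right skeleton.

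One conceptual point deserves correction. You write that ``$\eta$-invariance (not mere excessiveness) is exactly what makes the kernel-level duality identity an equality of measures.'' That is not quite right: the identity $\eta(dx)\,\hat P_t(x,dy)=\eta(dy)\,P_t(y,dx)$ on $E\times E$ is literally equivalent to $\langle P_t h,g\rangle_\eta=\langle h,\hat P_t g\rangle_\eta$ and so holds for \emph{any} duality reference measure, invariant or not. What invariance actually buys is something else: it forces the birth time $\alpha=-\infty$ $\cQ$-a.e.\ (and, with conservativity, $\beta=+\infty$ a.e.), so the constraints $\{\alpha<s_l\}$ and $\{t_k<\beta\}$ in the Kuznetsov marginal formula are vacuous and the symmetric two-sided stitching at time $0$ reproduces the full marginals. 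Without invariance the Kuznetsov measure still exists, but the two-sided product construction breaks down precisely because the process can be born or die at finite times. You may also want to say explicitly that the process is conservative (no killing), as that is what guarantees $\beta=+\infty$; otherwise $\eta$-invariance for $P$ alone does not preclude finite death times. These are refinements, not gaps; the argument as a whole is sound and is the standard proof.
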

In words, to build $\cQ$ one samples the invariant measure $\eta$ at time $0$, and from the outcome starts an independent copy of $X$ to the right and an independent copy of the dual $\hat X$ to the left. \smallskip

Mitro's theorem allows to give a simple argument for Hunt's time-reversal theorem:
\begin{proof}[Proof of Theorem \ref{Hunt}]
 If $X$ and $\hat X$ are in duality with respect to $\eta$, then Mitro's construction explains how to construct $\cQ$ and how to construct $\hat \cQ$. In particular, this  shows that one being the image under time-reversal of the other. Also note that stationary times for $\cQ$ are stationary for $\hat \cQ$ as well, thus, Fitzsimmons' representation (Theorem ~\ref{Tf}) implies the claim. 
\end{proof}

\begin{example}
	A $d$-dimensional Brownian motion is in duality with itself with respect to  Lebesgue measure $\lambda^d$ on $\R^d$.
\end{example}
Since Brownian motion is in duality to itself, Mitro's theorem gives a simple construction of the Kuznetsov measure of a Brownian motion with Lebesgue invariant measure. Taking the Palm measure with respect to any stationary time yields a simple representation of the corresponding quasi-process which, in fact, is Sznitman's interlacement intensity $\nu$:
\begin{thm}\label{t}
	(a) Suppose $X$ is a $d$-dimensional BM and $\lambda^d$ the Lebesgue measure on $\R^d$, then the Brownian interlacement intensity is the quasi-process $\cP$ for $(P,\beta \lambda^d)$ for some $\beta>0$.\\ 
	(b) The Brownian interlacement intensity $\nu$ is obtained by taking a two-sided BM started in Lebesgue measure, restricting an arbitrary intrinsic (i.e.\ finite and stationary)  time to $[0,1]$ and identifying paths that are translations of each other.
	\end{thm}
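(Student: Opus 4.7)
The plan is to exhibit the quasi-process $\cP$ for $(P,\beta\lambda^d)$ explicitly through Mitro's theorem and Fitzsimmons' Palm identity, verify that it satisfies the defining property in Definition~\ref{RI}, and then invoke the uniqueness part of Theorem~\ref{Th}. Part~(b) is essentially the resulting construction read out: since $d$-dimensional BM is self-dual with respect to $\lambda^d$, Mitro's theorem identifies the Kuznetsov measure $\cQ$ for $(P,\lambda^d)$ as the two-sided BM launched at time $0$ from Lebesgue measure, and Fitzsimmons' identity (\ref{palm}) realises the quasi-process as the Palm measure $\cP(A)=\cQ(A\cap\{S\in[0,1]\})$ on the invariant $\sigma$-field $\mathcal{A}$, for any intrinsic stationary time $S$. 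A concrete $S$ is the a.s.\ unique $t\in\R$ at which $|\omega_t|$ attains its minimum; this is shift-covariant and $\cQ$-a.e.\ finite because a two-sided BM in $d\ge 3$ tends to infinity at both ends.

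For part~(a), I would fix a closed ball $B\subset\R^d$ and apply the Palm identity to the shift-covariant functional $S:=H_B$ on the invariant event $\{H_B<\infty\}\cong W^*_B$. This yields, on $\mathcal{A}$,
$$\mathbf{1}_{W^*_B}\,\cP=\cQ(\,\cdot\,;\,H_B\in[0,1]),$$
so the task reduces to describing the measure on the right, time-shifted so that $H_B$ becomes $0$. Mitro's description of $\cQ$ (marginal $\lambda^d$ at any fixed time; forward law $P$; backward law $\hat P=P$) gives three ingredients. First, shift-invariance of $\cQ$ makes $H_B$ Lebesgue-distributed along $\R$, so the window $\{H_B\in[0,1]\}$ contributes only a finite normalising constant. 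Second, the strong Markov property under $\cQ$ at $H_B$ makes $(\omega_{H_B+t})_{t\ge 0}$ an unconditioned BM $P_{\omega_{H_B}}$ from $\omega_{H_B}\in\partial B$. Third, Hunt's time-reversal (Theorem~\ref{Hunt}), or equivalently the symmetry built into Mitro's construction, turns the past into the dual (= same) BM from $\omega_{H_B}$ conditioned never to re-enter $B$, that is, into $P^B_{\omega_{H_B}}$; the conditioning simply encodes the defining event that the past never hits $B$ before $H_B$.

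The remaining and most delicate step is identifying the law of $\omega_{H_B}$ on $\partial B$ as a multiple of $e_B$, and this is the main obstacle. My approach would use that the Mitro measure $\cQ$ is translation- and rotation-invariant (because $\lambda^d$ is, and BM is isotropic); in particular it is rotation-invariant around the centre of $B$, forcing the law of $\omega_{H_B}$ on $\partial B$ to be rotation-invariant, hence the uniform surface measure there. Since for a ball $e_B$ is itself uniform on $\partial B$, the two measures agree up to a constant $\beta>0$. Assembling the three ingredients with this identification shows that the time-shifted $\cQ(\,\cdot\,;\,H_B\in[0,1])$ equals $\beta\,Q_B$ in the notation of Section~\ref{Sec2}; projecting to $W^*$ gives $\beta\,Q^*_B$, so $\beta^{-1}\cP$ satisfies Definition~\ref{RI} for every closed ball and, via the restriction relation (\ref{res}) together with the consistency identity (\ref{hh}), for every compact $K\subset\R^d$. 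Theorem~\ref{Th} then forces $\nu=\beta^{-1}\cP$, proving part~(a); part~(b) is the Fitzsimmons--Mitro representation recalled in the first paragraph.
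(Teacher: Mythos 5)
Your overall strategy matches the paper's: identify $\cP$ as the Palm measure (via Fitzsimmons) of the Mitro two-sided Brownian motion $\cQ$, and then verify the defining property of Definition~\ref{RI} by checking (i) the hitting law at $H_B$, (ii) forward dynamics by strong Markov at $H_B$, and (iii) backward dynamics by self-duality/time-reversal, finishing with the restriction relation \eqref{res}. Your step (iii) ("time-reversal turns the past into a BM conditioned not to re-enter $B$") is the same idea the paper makes rigorous via Getoor's last-exit decomposition. The genuine divergence is in step (i). Where the paper derives $\cP(\omega_{H_B}\in dx,\,H_B<\infty)=\frac{e_B(dx)}{\kap(B)}\,\cP(H_B<\infty)$ for \emph{general} compact $B$ by applying the strong Markov property at $H_{B_N}$ and letting $N\to\infty$ with the harmonic-measure limit \eqref{hhh}, you instead exploit translation- and rotation-invariance of $\cQ$ to see that the hitting law on $\partial B$ must be uniform, hence a multiple of $e_B$, when $B$ is a ball. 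That is a neat and more elementary shortcut for balls, and since general $Q_K^*$ are built from balls via \eqref{res}, it does, in principle, suffice.

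However, there is a real gap in how you finish: the symmetry argument only shows that for each ball $B$ there is \emph{some} constant $\beta_B>0$ with $\mathbf{1}_{W^*_B}\cP=\beta_B\,Q^*_B$; it says nothing about the total mass, i.e.\ whether $\cP(H_B<\infty)$ is proportional to $\kap(B)$ with the same proportionality constant for every ball. Writing ``the two measures agree up to a constant $\beta>0$'' and then treating $\beta$ as ball-independent silently assumes the very consistency you need to prove. This is precisely the content of the paper's display chain ending in equivalence \eqref{q}: using the strong Markov property at $H_{B'}$ for $B\subset B'$ together with the invariance relation \eqref{hh}, the paper shows $\cP(H_B<\infty)=\cP(H_{B'}<\infty)\,\kap(B)/\kap(B')$, so a single normalization pins down $\cP(H_B<\infty)=\kap(B)$ for all compact $B$. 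Your proposal mentions \eqref{hh} only in passing, for extending from balls to compacts, but does not carry out the analogous normalization/consistency step; without it one cannot conclude that $\beta^{-1}\cP$ satisfies Definition~\ref{RI} simultaneously for all balls. The gap is fixable---either replicate the paper's \eqref{q} argument, or use the consistency $\mathbf{1}_{W^*_B}Q^*_{B'}=Q^*_B$ from \eqref{res} to force $\beta_B=\beta_{B'}$ for nested balls, or appeal to Brownian scaling ($\cP(H_{B_r}<\infty)\propto r^{d-2}\propto\kap(B_r)$)---but as written this step is missing.
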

	Recall from the discussion of the Riesz decomposition above that $\alpha=-\infty$ for  $\cQ$-almost all trajectories since $\lambda^d$ is invariant. The transience of Brownian motion on~$\R^d$ for  $d\geq 3$ then implies that first entrance and last exit times for compact sets are not intrinsic. Instead one can choose the intrinsic time $S=\text{argmin}_{t\in\R} \{|\omega_t|\}$. We emphasize once more that the choice of the intrinsic time is irrelevant. 

\begin{proof}[Proof of Theorem \ref{t}]
	Only (a) needs a proof, (b) is then a consequence of Mitro's construction and Fitzsimmons' theorem.\smallskip
	
	It is enough to show that for some $\beta>0$ the (unique) quasi-process for $(P,\beta \lambda^d)$ has the defining properties of the (unique) interlacement intensity. The multiple $\beta$ will be chosen in due course. \smallskip
	
(i) 
In this first part of the proof we calculate the first-hitting distribution $\cP( \omega_{H_B}\in \cdot)$ of a compact set $B$. Let us first show that $\cP(H_B<\infty)<\infty$. By monotonicity we can restrict  attention to $B$ with strictly positive capacity.
By definition $\cP$ is the quasi-process for $(P,\beta \lambda^d)$, hence, the strong Markov property yields
	\begin{align*}
		\infty>\beta \lambda^d(B)&=\cP\left(\int_{-\infty}^\infty 1_B(\omega_t)\,dt\right)\\
		&=\cP\left(\int_{H_B}^\infty 1_B(\omega_t)\,dt,H_B<\infty\right)\\
		&=\int_B \cP(\omega_{H_B}\in dx, H_B<\infty) E^x\left[\int_0^\infty 1_{B}(X_t)\,dt\right]\\
		&\geq \cP(H_B<\infty) \inf_{x\in B}E^x\left[\int_0^\infty 1_{B}(X_t)\,dt\right].
	\end{align*}
	The infimum is strictly positive, thus, $\cP(H_B<\infty)<\infty$.\smallskip

	From the strong Markov property we obtain, for any $N\in \N$ large enough with $B\subset B_{N}=\{x:|x|\leq N\}$,
	\begin{align*}
		&\quad \cP\left(\omega_{H_B}\in dx, H_B<\infty\right)\\
		&= \cP\left(P_{\omega_{H_{B_N}}}\left[X_{H_B}\in dx\,|\, H_B<\infty\right]P_{\omega_{H_{B_N}}}\left[H_B<\infty\right]1_{H_{B_N}<\infty}\right).
	\end{align*}
	Using that by construction $\cP$ a.a. sample paths have left-limits $\infty$ in norm and additionally \eqref{hhh} we obtain
	\begin{align*}
		 \cP(&\omega_{H_B}\in dx, H_B<\infty)\\
		&= \lim_{N\to\infty} \cP\left(\omega_{H_B}\in dx, H_B<\infty, H_{B_N}<\infty\right)\\
		&=\lim_{N\to\infty}\cP\left(P_{\omega_{H_{B_N}}}\left[X_{H_B}\in dx\,|\, H_B<\infty\right]P_{\omega_{H_{B_N}}}(H_B<\infty)1_{H_{B_N}<\infty}\right)\\
		&=\frac{e_B(dx)}{\kap (B)}\lim_{N\to\infty}\cP\left(P_{\omega_{H_{B_N}}}\left[H_B<\infty\right]1_{H_{B_N}<\infty}\right)\\
		&=\frac{e_B(dx)}{\kap (B)}\cP\left(H_B<\infty\right),
	\end{align*}
	where $e_B(dx)$ is the equilibrium measure on $B$. We now claim that~$\cP$ can be normalized so that $\cP(H_{B}<\infty)=e_B(B)$ for all compact $B$ with strictly positive capacity.
 First, by the strong Markov property and \eqref{hh}, we obtain for $B\subset B'$,
	\begin{align*}
		\cP(H_B<\infty)&=\cP\big( P_{\omega_{H_{B'}}}( H_B<\infty)   1_{H_{B'}} <\infty\big)\\
		&=\int \cP(\omega_{H_{B'}}\in dx , H_{B'} <\infty)  P_x( H_B<\infty)  \\
		&=\int \frac{e_{B'}(dx)}{\kap (B')}\cP\left(H_{B'}<\infty\right)  P_x( H_B<\infty)\\
		&=\cP\left(H_{B'}<\infty\right)\frac{1}{\kap (B')} \, P_{e_{B'}}( H_B<\infty)\\
		&=\cP\left(H_{B'}<\infty\right)\frac{1}{\kap (B')} \, \kap (B)
	\end{align*}
	which proves that for general compact sets $B\subset B'$ with strictly positive capacity
	\begin{align}\label{q}
		\cP(H_{B}<\infty)=\kap (B)\quad \Longleftrightarrow \quad \cP(H_{B'}<\infty)=\kap (B').
	\end{align}
	 Normalizing $\cP$ (i.e. choosing $\beta$ appropriately) such that $\cP(H_{B_1}<\infty)=\kap (B_1)$, equivalence \eqref{q} implies $\cP(H_{B}<\infty)=e_B(B)$ for all $B$ compact. Hence, we have proved
	 \begin{align*}
		 \cP\left(\omega_{H_B}\in dx, H_B<\infty\right)=e_B(dx),\quad \forall B\text{ compact},
	 \end{align*}
	 for a suitable choice of $\beta>0$. Hence, the hitting distribution is the one needed for Definition \ref{RI}.\smallskip
	
	(ii) Since $H_B$ is a stationary stopping time, the forward in time process after $H_B$ under $\cP(\cdot, H_B<\infty)$ is a BM as needed for Definition \ref{RI}.\smallskip
	
(iii) Finally, we need to determine the backwards dynamics. Let  $B$ denote a closed ball and denote by $\hat H_B$ the last exit time from $B$.
Using that Brownian motion started in Lebesgue measure is self-dual we conclude that under~$\cP$  the time-reversed  process $(\omega_{H_B-t})_{t\geq 0}$ up to time $H_B$ has the same `distribution' as the forwards in time process  $(\omega_{\hat H_B+t})_{t\geq 0}$ after time $\hat H_B$. Recall that 
$(\omega_{H_B+t})_{t\geq 0}$ is a Brownian motion started in the finite measure  $\cP(\omega_{H_B}\in \cdot, H_B<\infty)$. Hence, by the last exit decomposition of Getoor \cite{Getoor}, Theorem 2.12, we know that under $\cP$  the pre-exit process and the post-exit processes of $(\omega_{H_B+t})_{t\geq 0}$ are independent conditionally on the point of last exit with the post-exit process being  Brownian motion conditioned on not returning to $B$.\smallskip

(i)-(iii) show that for any closed ball $B$ the defining property $$\cP(\cdot, H_B<\infty)=Q^*_B(\cdot)$$ holds.
Using the definition \eqref{res} of $Q_K$ for a compact set $K\subset B$, the interlacement property for $\cP$ is seen as follows:
\begin{align*}
	\cP(\cdot,H_K<\infty)=\cP(\cdot, H_B<\infty,H_K<\infty)=Q^*_B(\cdot, H_K<\infty)=Q^*_K(\cdot).
\end{align*}
\end{proof}

 Having shown that the interlacement intensity $\nu$ is a quasi-process for a self-dual Markov process, a direct consequence of Hunt's time-reversibility theorem is the time-reversibility mentioned at the end of Section \ref{Sec2}:
 \begin{corollary}\label{co}
	If $\hat \nu$ is obtained from $\nu$ by time-reversal, then $\hat \nu=\nu$.
 \end{corollary}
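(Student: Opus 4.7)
The plan is to combine three ingredients already at our disposal: the identification of $\nu$ as a quasi-process carried out in Theorem \ref{t}, Hunt's time-reversal theorem (Theorem \ref{Hunt}), and the self-duality of Brownian motion with respect to Lebesgue measure noted in the Example. The strategy is essentially a one-line deduction, so the proof should be short.

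Concretely, I would first recall that Theorem \ref{t}(a) identifies $\nu$ as the unique quasi-process $\cP$ associated to $(P, \beta\lambda^d)$, where $P$ is the Brownian semigroup and $\beta>0$ is the normalizing constant. Applying Theorem \ref{Hunt} to this quasi-process, its time-reversal $\hat\nu$ is the quasi-process for $(\hat P, \beta\lambda^d)$, where $\hat P$ is the semigroup of the dual process. Since Brownian motion is in duality with itself with respect to $\lambda^d$, we have $\hat P = P$, so $\hat \nu$ is a quasi-process for $(P, \beta\lambda^d)$. Invoking the uniqueness statement in the corollary following Theorem \ref{Tf}, we conclude $\hat \nu = \nu$.

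There is no real obstacle here, because all the work has already been done: the construction of $\nu$ as a quasi-process in Theorem \ref{t} is what makes time-reversibility immediate, whereas in Sznitman's original formulation it was a separate computation. The only minor point to be careful about is confirming that Theorem \ref{Hunt} applies to the invariant (non-$\sigma$-finite in $t$) Lebesgue measure; this is fine because $\lambda^d$ is excessive (in fact invariant) for Brownian motion, which is the hypothesis of that theorem. One should also remark that the operation of time-reversal on $W^*$ is well-defined, since it descends from the time-reversal on $W$ by commuting with the time-shift equivalence.
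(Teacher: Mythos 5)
Your proof is correct and follows precisely the route the paper intends: the paper states Corollary \ref{co} as a direct consequence of Theorem \ref{t}(a) (identifying $\nu$ as the quasi-process for $(P,\beta\lambda^d)$), Hunt's time-reversal theorem (Theorem \ref{Hunt}), the self-duality of Brownian motion with respect to Lebesgue measure, and uniqueness of quasi-processes. You have simply written out the details that the paper leaves implicit, including the sensible remarks about $\lambda^d$ being invariant and time-reversal descending to $W^*$.
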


We can finally turn back to random interlacements and give an alternative construction:

\begin{thm}
	Take a Poisson point process $\Xi$ whose intensity measure is two-sided Brownian motion started from Lebesgue measure $\lambda^d$ on $\R^d$. 
Restrict $\Xi$ on the paths being closest to the origin at time $T\in [0,1]$ and identify paths that are translations of each other. This yields a random interlacement for some level $\beta>0$.
\end{thm}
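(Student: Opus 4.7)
The plan is to combine Mitro's two-sided construction of the Kuznetsov measure, Fitzsimmons' Palm formula (Theorem~\ref{Tf}), the identification of the interlacement intensity as a quasi-process (Theorem~\ref{t}), and the restriction and mapping theorems for Poisson point processes. Since Brownian motion on $\R^d$ is self-dual with respect to Lebesgue measure, Mitro's theorem shows that the intensity measure of $\Xi$ is, up to a positive scalar, the Kuznetsov measure $\cQ$ for $(P,\beta\lambda^d)$; the scalar $\beta>0$ is the one supplied by Theorem~\ref{t}(a).

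Next I would verify that $S(\omega):=\mathrm{argmin}_{t\in\R}|\omega_t|$ is a finite stationary time so that Theorem~\ref{Tf} applies with this choice of $S$. Stationarity $S\circ\sigma_t=S-t$ is immediate. For finiteness, note that $\cQ$-a.e.\ trajectory is defined on all of $\R$: the forward part after time~$0$ is an ordinary, non-exploding Brownian motion, while invariance of $\lambda^d$ places $\cQ$ into the invariant part of the Riesz decomposition, giving $\alpha=-\infty$ $\cQ$-a.s. Transience of BM in $d\geq 3$ then forces $|\omega_t|\to\infty$ as $t\to\pm\infty$, so continuity of $t\mapsto|\omega_t|$ guarantees the infimum is attained, and standard path properties of BM give that it is attained at a single point almost surely. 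This verification of the intrinsic property of $S$ is the only mildly delicate step of the argument.

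The conclusion now follows from two elementary Poisson point process operations. Restricting $\Xi$ to the event $\{S\in[0,1]\}$ yields, by the Poisson restriction theorem, a PPP on $\Omega$ with intensity $\cQ(\,\cdot\,,S\in[0,1])$; subsequent projection by the time-shift quotient $\pi$ to $W^*$ gives, by the Poisson mapping theorem, a PPP on $W^*$ whose intensity is the push-forward of $\cQ(\,\cdot\,,S\in[0,1])$ under $\pi$. By formula \eqref{palm} this push-forward is precisely the Palm measure of $\cQ$ and $S$ read on the invariant $\sigma$-field, which by Theorem~\ref{Tf} coincides with the quasi-process $\cP$ for $(P,\beta\lambda^d)$, and Theorem~\ref{t}(a) identifies this $\cP$ with the Brownian interlacement intensity $\nu$. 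The described construction therefore produces a PPP on $W^*$ with intensity proportional to $\nu$, that is, a Brownian random interlacement at some level $\beta>0$, as claimed.
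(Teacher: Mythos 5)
Your proposal is correct and follows exactly the approach the paper intends: the paper's own proof is a one-line invocation of Theorem~\ref{t} together with the fact that restricting the intensity restricts the Poisson point process, and you have simply unpacked that sentence by re-tracing Mitro's two-sided construction, the Palm formula of Theorem~\ref{Tf}, the check that $S=\mathrm{argmin}_{t}|\omega_t|$ is intrinsic (which the paper also records in the remark following Theorem~\ref{t}), and the restriction and mapping theorems for Poisson point processes. Everything you wrote is consistent with Theorem~\ref{t}(b), which is the precise statement about the intensity measure that the paper's terse proof leans on.
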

\begin{proof}
	This is a consequence of Theorem \ref{t} using the fact that restriction of the intensity turns into restriction of the Poisson point process. 
\end{proof}
 
\subsection*{Acknowledgement} The authors would like to thank Pat Fitzsimmons and Jay Rosen for drawing our attention to reference \cite{R}.

\end{document}